\newtheorem{theorem}{Theorem}[section]
\theoremstyle{definition}
\newtheorem{defn}[theorem]{Definition}
\newtheorem{rmk}[theorem]{Remark}
\numberwithin{equation}{theorem}
\newcommand{\AAA}{\mathbb{A}}
\newcommand{\CC}{\mathbb{C}}
\newcommand{\FF}{\mathbb{F}}
\newcommand{\QQ}{\mathbb{Q}}
\newcommand{\ZZ}{\mathbb{Z}}
\newcommand{\calL}{\mathcal{L}}
\newcommand{\calO}{\mathcal{O}}
\DeclareMathOperator{\ad}{ad}
\DeclareMathOperator{\Fil}{Fil}
\DeclareMathOperator{\Frob}{Frob}
\DeclareMathOperator{\Fss}{F-ss}
\DeclareMathOperator{\Gal}{Gal}
\DeclareMathOperator{\GL}{GL}
\DeclareMathOperator{\gr}{gr}
\DeclareMathOperator{\HT}{HT}
\DeclareMathOperator{\M}{M}
\DeclareMathOperator{\rec}{rec}
\DeclareMathOperator{\Res}{Res}
\DeclareMathOperator{\Ss}{ss}
\DeclareMathOperator{\WD}{WD}
\title{A Rank-Two Case of Local-Global Compatibility for $l=p$}
\author{Yuji Yang}
\address{Beijing International Center for Mathematical Research, Peking University}
\email{yujiy@pku.edu.cn}
\date{}
\begin{document}
\begin{abstract}
We prove the classical $l=p$ local-global compatibility conjecture for certain regular algebraic cuspidal automorphic representations of weight $0$ for $\GL_2$ over CM fields. Using an automorphy lifting theorem, we show that if the automorphic side comes from a twist of Steinberg at $v\mid l$, then the Galois side has nontrivial monodromy at $v$. Based on this observation, we will give a definition of the Fontaine--Mazur $\calL$-invariants attached to certain automorphic representations.
\end{abstract}
\maketitle
\tableofcontents
\section{Introduction}
The goal of this work is to prove a new case of local-global compatibility for automorphic Galois representations over CM fields at $l=p$. Let $F$ be a CM field and let $G_F=\Gal(\overline{F}/F)$ denote the absolute Galois group. Let $\pi$ be a regular algebraic cuspidal automorphic representation of $\GL_n(\AAA_F)$. Fix a prime $l$ and an isomorphism $\iota:\overline{\QQ}_l\cong\CC$. Conjecturally there exists a continuous semisimple Galois representation $r_{\iota}(\pi):G_F\to\GL_n(\overline{\QQ}_l)$ that is potentially semistable (which is known to be equivalent to de Rham) above $l$, such that we have an isomorphism of Weil--Deligne representations
$$\WD(r_{\iota}(\pi)|_{G_{F_v}})^{\Fss}\otimes_\iota\CC\cong\rec_{F_v}(\pi_v\otimes|\det|^{\frac{1-n}{2}})$$
for any finite place $v$ in $F$. Here on the automorphic side $\pi_v$ is the local component of $\pi=\otimes'_v\pi_v$ at $v$, and $\rec_{F_v}$ is the local Langlands correspondence for $F_v$ normalized as in \cite{HT01}; on the Galois side $G_{F_v}=\Gal(\overline{F}_v/F_v)$ is the decomposition group at $v$, and $\WD$ is a functor taking a continuous semisimple Galois representation to a Weil--Deligne representation by Grothendieck's $l$-adic monodromy theorem when $v\nmid l$ and by a construction of Fontaine \cite{Fon94} when $v\mid l$ and $r_{\iota}(\pi)|_{G_{F_v}}$ is potentially semistable. We may then study the above isomorphism for all finite places not dividing $l$ and for all places above $l$ separately. The former is called the $l\ne p$ case and the latter is called the $l=p$ case. 

From now on we will focus on the $l=p$ case. If $\pi$ is polarizable, the $l=p$ local-global compatibility has been established by work of many people \cite{BLGGT12, BLGGT14a, Car14}, etc. For non-polarizable $\pi$, the representations $r_{\iota}(\pi)$ have been constructed by \cite{HLTT16} and \cite{Sch15} using different methods. Recently, A'Campo proved that under certain conditions the representation $r_{\iota}(\pi)$ is de Rham with specified Hodge--Tate weights \cite[Theorem~4.3.3]{ACa23}, and Hevesi proved that under the same conditions the $l=p$ local-global compatibility holds up to semisimplification \cite[Theorem~1.1]{Hev23}. This means that to prove the $l=p$ local-global compatibility, we only need to compare the monodromy operators of the two Weil--Deligne representations. It turns out when $\pi$ is of rank $2$ and weight $0$, we can solve the problem most of the time.

\begin{theorem}\label{density}
Let $F$ be a CM field and let $\pi$ be a regular algebraic cuspidal automorphic representation of $\GL_2(\AAA_F)$ of weight $0$. There is a set of rational primes $l$ of Dirichlet density one such that for any isomorphism $\iota:\overline{\QQ}_l\cong\CC$ and any finite place $v\mid l$ in $F$, we have
$$\WD(r_{\iota}(\pi)|_{G_{F_v}})^{\Fss}\otimes_\iota\CC\cong\rec_{F_v}(\pi_v\otimes|\det|^{-1/2}).$$
\end{theorem}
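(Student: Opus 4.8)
The plan is to combine the two available inputs — A'Campo's theorem (\cite[Theorem~4.3.3]{ACa23}) that $r_\iota(\pi)|_{G_{F_v}}$ is de Rham with regular Hodge--Tate weights $\{0,1\}$, and Hevesi's theorem (\cite[Theorem~1.1]{Hev23}) that $\WD(r_\iota(\pi)|_{G_{F_v}})^{\Fss}$ and $\rec_{F_v}(\pi_v\otimes|\det|^{-1/2})$ have isomorphic semisimplifications — and to reduce the theorem to a statement about monodromy operators. Since $\pi$ is cuspidal, $\pi_v$ is generic, hence an irreducible principal series, a twist of Steinberg, or supercuspidal. I would first dispose of the principal series and supercuspidal cases: if $\pi_v$ is supercuspidal the underlying Weil representation of $\rec_{F_v}(\pi_v\otimes|\det|^{-1/2})$ is irreducible, and if $\pi_v$ is an irreducible principal series its two constituent characters of $W_{F_v}$ do not differ by $|\cdot|^{\pm1}$, which is exactly the irreducibility condition; in either case every Weil--Deligne representation with the prescribed semisimple underlying Weil representation has vanishing monodromy, so Hevesi's isomorphism already upgrades to the isomorphism asserted in the theorem. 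The entire content of the theorem is therefore: if $\pi_v$ is a twist of the Steinberg representation, then $N\bigl(\WD(r_\iota(\pi)|_{G_{F_v}})\bigr)\ne 0$. Indeed, granting this, $\WD(r_\iota(\pi)|_{G_{F_v}})^{\Fss}$ and $\rec_{F_v}(\pi_v\otimes|\det|^{-1/2})$ are two-dimensional Frobenius-semisimple Weil--Deligne representations with the same semisimplification and both with nonzero nilpotent operator, hence isomorphic.

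I would establish this nonvanishing by contradiction, using an automorphy lifting theorem for $\GL_2$ over CM fields. Suppose $\pi_v$ is a twist of Steinberg at some $v\mid l$ but $N\bigl(\WD(r_\iota(\pi)|_{G_{F_v}})\bigr)=0$. Then $r_\iota(\pi)|_{G_{F_v}}$ is potentially crystalline of Hodge--Tate weights $\{0,1\}$ — in fact crystalline up to the twist recording the ramification of $\pi_v$, since the Steinberg type is Iwahoric — so its reduction $\bar r_\iota(\pi)|_{G_{F_v}}$ is of flat (Barsotti--Tate up to type) form. Restrict to the density-one set of primes $l$ that are unramified in $F$ and for which $\bar r_\iota(\pi)$ has adequate image and the remaining hypotheses of the available lifting theorems hold; the CM case, where $\bar r_\iota(\pi)$ is induced and these theorems do not apply, is handled separately by a direct computation with the inducing Hecke character. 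Because $r_\iota(\pi)|_{G_{F_v}}$ then satisfies the potentially crystalline local condition (of the inertial type of $\pi_v$) at $v$ and the de Rham condition of weight $\{0,1\}$ at the other places above $l$, while $\bar r_\iota(\pi)$ is automorphic, the automorphy lifting theorem — granting the nonvanishing of the relevant patched module, addressed below — shows that $r_\iota(\pi)$ is associated to a regular algebraic cuspidal automorphic representation $\pi'$ of $\GL_2(\AAA_F)$ of weight $0$ of the patched level, hyperspecial at $v$ up to that type; thus $\pi'_v$ is a (possibly ramified) principal series, not a twist of Steinberg. On the other hand $r_\iota(\pi')\cong r_\iota(\pi)$, so comparing Frobenius eigenvalues at the cofinitely many unramified places and invoking strong multiplicity one for $\GL_2$ gives $\pi'\cong\pi$, whence $\pi'_v\cong\pi_v$ is a twist of Steinberg — a contradiction. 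Therefore $N\ne 0$.

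The step I expect to be the main obstacle is the nonvanishing invoked above: one needs some weight-$0$ cuspidal automorphic form congruent to $\bar r_\iota(\pi)$ that is unramified (up to type) at $v$, equivalently that the patched module for the potentially crystalline local deformation problem at $v$ is nonzero — a level-lowering statement. Over $\QQ$ this is Ribet's level lowering, with the flatness of $\bar\rho|_{G_{\QQ_p}}$ as its hypothesis; over a CM field the route through Ihara's lemma is unavailable, so I would instead argue by support. The semistable (up to type) local deformation ring at $v$ of Hodge--Tate weights $\{0,1\}$ and fixed Frobenius-semisimplification is, for our $l$ and our suitably generic $\bar r_\iota(\pi)|_{G_{F_v}}$, connected, and the patched module over it is maximal Cohen--Macaulay; it is moreover nonzero, because $\pi$ itself contributes — $r_\iota(\pi)|_{G_{F_v}}$ being semistable (up to type) of the right weights by A'Campo and Hevesi, whatever its monodromy — so this patched module is supported on the whole spectrum, in particular on the closed crystalline locus, and transporting this along the relation between Iwahoric and hyperspecial levels at $v$ yields the required nonvanishing. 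Making this precise in the non-polarizable CM setting, and assembling the density-one set of primes (intersecting adequacy of $\bar r_\iota(\pi)$, unramifiedness of $l$ in $F$, and, in the non-CM case, largeness of the image — each failing for only finitely many $l$), is where the real work lies. One cannot shortcut the argument by weak admissibility and the Ramanujan bound: a two-dimensional crystalline representation with Hodge--Tate weights $\{0,1\}$ whose Frobenius eigenvalues are in ratio $q$ is perfectly weakly admissible, and the Ramanujan bound is not available for $\pi$ in this generality, so the automorphy lifting input is genuinely needed.
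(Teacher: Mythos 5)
Your reduction is correct and matches the paper's: combining A'Campo (potential semistability with the right Hodge--Tate weights) and Hevesi (local-global compatibility up to semisimplification), the whole content of the theorem is that the monodromy operator $N$ on $\WD(r_\iota(\pi)|_{G_{F_v}})$ is nonzero whenever $\pi_v$ is a twist of Steinberg. Your observation that this, together with Frobenius-semisimplicity, pins down the isomorphism class of a two-dimensional Weil--Deligne representation is also right.

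The genuine gap is in how you produce the congruent automorphic input for the lifting theorem, and it is exactly the step you flag as ``the main obstacle.'' The automorphy lifting theorem available over CM fields (Theorem~\ref{automorphy_lifting}, from \cite{ACC+18} via \cite{AN19}) requires as \emph{input} a weight-$0$ cuspidal automorphic representation $\pi_1$, unramified above $l$, with $\overline{r}_\iota(\pi_1)\cong\overline{r}_\iota(\pi)$. You propose to obtain such a $\pi_1$ over $F$ itself by a level-lowering argument phrased in terms of support of a patched module: that the patched module for the semistable deformation problem is maximal Cohen--Macaulay over a connected deformation ring and hence has full support, in particular on the crystalline locus. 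This is not established in the non-polarizable CM setting by any of the references in play, and proving it would be a substantial result in its own right (Ribet's level lowering or an Ihara-type argument is not available here, as you note; the patching machinery of \cite{ACC+18} is exactly what goes into the lifting theorem, and it does not by itself yield level lowering). The paper sidesteps this entirely: it does \emph{not} produce the congruent form over $F$. It first applies a \emph{potential automorphy} theorem (Theorem~\ref{potential_automorphy}, i.e.\ \cite[Theorem~3.9]{AN19}), whose proof uses a moduli space of Hilbert--Blumenthal abelian varieties in the spirit of Taylor, to manufacture a finite CM extension $F_1/F$ (disjoint from a chosen avoidance field, with $l$ unramified) and a weight-$0$ cuspidal $\pi_1$ over $F_1$, unramified above $l$, with $\overline{r}_\iota(\pi_1)\cong\overline{r}_\iota(\pi)|_{G_{F_1}}$. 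Since vanishing of $N$ can be checked after restriction to $G_{F_{1,v_1}}$, nothing is lost by passing to $F_1$. This intermediate step — passing to an extension via potential automorphy rather than level-lowering over $F$ — is the missing idea.

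A secondary difference: after lifting, you invoke strong multiplicity one to force $\pi'\cong\pi$ and derive a contradiction with $\pi_v$ being Steinberg. Over $F_1$ this is not available (the lifting theorem produces $\Pi$ over $F_1$, not over $F$, and $\pi|_{F_1}$ need not be identified with $\Pi$). The paper instead reaches the contradiction locally: since $\Pi_{v_1}$ is unramified and $r_\iota(\Pi)|_{G_{F_{1,v_1}}}\cong r_\iota(\pi)|_{G_{F_{1,v_1}}}$, the semisimplified local-global compatibility forces $\rec_{F_{1,v_1}}(\Pi_{v_1}\otimes|\det|^{-1/2})$ to be an unramified, $N=0$ Weil--Deligne representation with Frobenius-eigenvalue ratio $q^{\pm1}$; the corresponding unramified subquotient of the induced representation is non-generic, contradicting genericity of the cuspidal $\Pi$. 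Finally, your plan to handle the dihedral (``CM'') case of $\overline{r}_\iota(\pi)$ by a separate direct computation is not how the paper proceeds; the density-one set of primes is produced by \cite[Lemma~2.9]{AN19}, which arranges that $\overline{r}_\iota(\pi)(G_{F(\zeta_l)})$ is enormous (so in particular irreducible and non-dihedral) together with decomposed genericity and the existence of a suitable $\sigma$; the induced case is simply excluded for these $l$.
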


\begin{rmk}
For simplicity, here we present a compromised version of our main theorem. For the full version with more technical conditions, see Theorem \ref{main}.
\end{rmk}

\hspace*{\fill} \\
\textbf{Method of proof.}
By \cite[Theorem~1.1]{Hev23}, it suffices to show that when $\pi_v$ is special (i.e. a twist of Steinberg), the Galois representation $r_{\iota}(\pi)|_{G_{F_v}}$ has nontrivial monodromy. Assume otherwise. Combined with \cite[Theorem~4.3.3]{ACa23}, after a base change we may assume that $r_{\iota}(\pi)|_{G_{F_v}}$ is crystalline. Next we apply a potential automorphy theorem to find an automorphic representation $\pi_1$ that is unramified above $l$ such that there is a congruence between mod $l$ Galois representations $\overline{r}_{\iota}(\pi)\cong\overline{r}_{\iota}(\pi_1)$ when restricted to the absolute Galois group of some finite extension of $F$. Then we are able to apply an automorphy lifting theorem to find an automorphic representation $\pi_2$ that is unramified above $l$ such that $r_{\iota}(\pi)\cong r_{\iota}(\pi_2)$ when restricted to the absolute Galois group of some field extension of $F$. From there we deduce a contradiction with genericity.

The idea to approach the local-global compatibility conjecture using automorphy lifting theorems is inspired by Luu \cite{Luu15}. This method has been used in \cite{AN19} and \cite{Yan21} to prove cases of the $l\ne p$ local-global compatibility, where the isomorphism is known up to semisimplification by the main result of \cite{Var14}. However, in the $l=p$ case there is extra subtlety coming from the construction of the $\WD$ functor. We need to use the fact that this construction is independent of field extensions in some sense. See the next chapter for a review with more details.



Our method is confined to rank two and weight $0$. The proof of our potential automorphy theorem \cite[Theorem~3.9]{AN19} uses a moduli space of Hilbert--Blumenthal abelian varieties which realizes a fixed mod $l$ Galois representation, as in Taylor's original work on the subject \cite{Tay02}. This method only works for $\GL_2$. On the other hand, our automorphy lifting theorem \cite[Theorem~6.1.1]{ACC+18} (of Fontaine--Laffaille version) does not allow a change of weight, and our potential automorphy theorem produces an automorphic representation of weight $0$ as an input of the automorphy lifting theorem. Thus we are also confined to weight $0$. 

When dealing with the $l\ne p$ local-global compatibility, \cite{AN19} also has the restriction to rank two and weight $0$. By using the ordinary automorphy lifting theorem \cite[Theorem~6.1.2]{ACC+18}, \cite{Yan21} removes the weight $0$ condition at the cost of imposing an ordinarity assumption on the automorphic representation at the prime $l$ (the residue characteristic of the coefficients). However, this can not be translated to the $l=p$ local-global compatibility problem, since the ordinary automorphy lifting theorem does not guarantee that the lift $\pi_2$ is unramified above $l$, which is essential for our proof.

To go beyond rank two, we need to upgrade our potential automorphy theorems. When $\pi$ is polarizable such theorems are known \cite{HSBT10, BLGHT11, BLGGT14b} by using Dwork family of motives. In the non-polarizable case, Qian proved potential automorphy theorems under general settings \cite[Theorem~1.1]{Qia23} using Dwork families. Recently in \cite{Mat23}, based on \cite{BLGGT14b, Qia23}, Matsumoto proved many versions of potential automorphy theorems, and through a careful investigation of the Jordan canonical form of the Weil--Deligne representations, he generalized the $l\ne p$ local-global compatibility results \cite{AN19, Yan21} to $\GL_n$. If this method can be translated to the $l=p$ case, then we believe analogous versions of our main theorems can be proved in higher rank.

\hspace*{\fill} \\
\textbf{Notations.} 
Throughout $F$ will be a CM field. If $v$ is a place of $F$, $F_v$ is the completion of $F$ at $v$. Let $\AAA_F$ be the ring of adeles of $F$. Fix $\overline{F}$ an algebraic closure of $F$. We write $G_F=\Gal(\overline{F}/F)$ for the absolute Galois group, and similarly $G_{F_v}=\Gal(\overline{F}_v/F_v)$.

For a non-archimedean local field $K$, let $I_K$ be the inertia subgroup and let $W_K$ be the Weil group.

We write $\epsilon_l$ for the $l$-adic cyclotomic character. We normalize our Hodge--Tate weights such that $\epsilon_l$ has Hodge–Tate weight $-1$. Let $B_{\text{dR}}$, $B_{\text{st}}$, $B_{\text{cris}}$ be Fontaine's rings of de Rham, semistable, and crystalline periods, respectively. We take $K$ and $E$ to be finite extensions of $\QQ_l$, with $E$ sufficiently large to contain the images of all embeddings $\tau:K\hookrightarrow\overline{\QQ}_l$. Let $K_0$ be the maximal unramified extension of $\QQ_l$ inside $K$. If $\rho:G_K\to\GL(V)\cong\GL_d(E)$ is a de Rham representation, we write $D_{\text{dR}}(\rho)=D_{\text{dR}}(V)=(B_{\text{dR}}\otimes_{\QQ_l}V)^{G_K}$, which is a $K\otimes_{\QQ_l}E$-module. We define $D_{\text{st}}$ and $D_{\text{cris}}$ analogously but they are both $K_0\otimes_{\QQ_l}E$-modules. We write $\HT_\tau(\rho)$ for the multiset of $\tau$-labelled Hodge--Tate weights of $\rho$, which are the degrees $i$ at which $\gr^i(D_{\text{dR}}(V)\otimes_{K_0\otimes E,\tau\otimes 1}E)\ne0$, counted with multiplicities equal to the $E$-dimensions of the corresponding graded pieces. 

Let $\pi$ be a regular algebraic cuspidal automorphic representation of $\GL_2(\AAA_F)$. We say that $\pi$ has weight $0$ if it has the same infinitesimal character as the trivial (algebraic) representation of $\Res_{F/\QQ} \GL_2$. For any finite place $v$ in $F$, $\pi_v$ is the local component of $\pi=\otimes'_v\pi_v$ at $v$, and $\rec_{F_v}$ is the local Langlands correspondence for $F_v$ of \cite{HT01}.

For a Weil--Deligne representation $(r,N)$, its Frobenius semisimplification is $(r,N)^{\Fss}=(r^{\Ss},N)$, and its semisimplification is $(r,N)^{\Ss}=(r^{\Ss},0)$. 

\hspace*{\fill} \\
\textbf{Acknowledgements.} 
I would like to thank Patrick Allen for bringing my attention to this project and answering my questions in the early stage with great patience. I would like to thank Liang Xiao and Yiwen Ding for their helpful suggestions on an early draft of this paper. I would like to thank Shanxiao Huang and Yiqin He for helpful discussions on $p$-adic Hodge theory and the $\calL$-invariants. 

\section{Some Important Results}

We first recall Fontaine's construction of the $\WD$ functor following \cite{Fon94, All16}. Let $K$ and $E$ be finite extensions of $\QQ_l$ with $E$ sufficiently large. 

For fixed finite extension $L/K$, we write $G_{L/K}=\Gal(L/K)$ and make the following definition.
\begin{defn}
A \emph{$(\varphi,N,G_{L/K})$-module} is a finite free $L_0\otimes_{\QQ_l}E$-module together with operators $\varphi$ and $N$, and a $\Gal(L/K)$-action, satisfying the following:
\begin{itemize}
    \item $\varphi$ is $E$-linear and $L_0$-semilinear (this means $\varphi(ax)=\sigma(a)\varphi(x)$ for any $a\in L_0$ and $x\in D$, where $\sigma\in\Gal(L_0/\QQ_l)$ is the absolute arithmetic Frobenius);
    \item $N$ is $L_0\otimes_{\QQ_l} E$-linear;
    \item $N\varphi=l\varphi N$;
    \item the $\Gal(L/K)$-action is $E$-linear and $L_0$-semilinear and commutes with $\varphi$ and $N$.
\end{itemize}
\end{defn}
We first associate a Weil--Deligne representation $(r_D, N_D)$ to a $(\varphi,N,G_{L/K})$-module $D$. Let $N_D=N$. Extend the action of $\Gal(L/K)$ to $W_K$ by letting $I_L$ act trivially. For $w\in W_K$, let $\alpha(w)\in\ZZ$ be such that the image of $w$ in $W_K/I_K$ is $\sigma^{-\alpha(w)}$, where $\sigma$ is the image of the absolute arithmetic Frobenius. We then define an $L_0\otimes_{\QQ_l}E$-linear action $r_D$ of $W_K$ on $D$ by $r_D(w)=w\varphi^{\alpha(w)}$. Since $L_0\otimes_{\QQ_l}E\cong\prod_{\tau:L_0\hookrightarrow E}E$, we have a decomposition $D=\prod_{\tau:L_0\hookrightarrow E}D_{\tau}$ where $D_{\tau}=D\otimes_{L_0\otimes_{\QQ_l}E,\tau\otimes 1}E$, and an induced Weil--Deligne representation $(r_{\tau}, N_{\tau})$ of dimension $[L_0:\QQ_l]$ over $E$. By \cite[2.2.1]{BM02}, the isomorphism class of $(r_{\tau}, N_{\tau})$ is independent of $\tau$, so we may denote any element in the isomorphism class by $\WD(D)$. 

Now let $\rho:G_K\to\GL_d(E)$ be a continuous potentially semistable representation. Choose $L/K$ such that $\rho|_{G_L}$ is semistable. There is a standard way to associate it with a $(\varphi,N,G_{L/K})$-module $$D=D_{\text{st},L}(\rho)=(B_{\text{st}}\otimes_{\QQ_l}V_{\rho})^{G_L}.$$ Still by \cite[\S 2.2.1]{BM02}, the isomorphism class of $\WD(D_{\text{st},L}(\rho))$ is independent of the choice of $L$. Hence we may set $\WD(\rho)=\WD(D_{\text{st},L}(\rho))$.

\begin{rmk}
Unlike in the $l\ne p$ case, in the $l=p$ case the $\WD$ functor is not faithful: it forgets the information from the filtration.
\end{rmk}

Next we explain some important results used in our proof. The first is an automorphy lifting theorem \cite[Theorem~2.1]{AN19}, which is essentially a special case of \cite[Theorem~6.1.1]{ACC+18}.

\begin{theorem}\label{automorphy_lifting}
Let $F$ be a CM field and let $l\ge5$ be a prime that is unramified in $F$. Let $\rho: G_F\to \GL_2(\overline{\QQ}_l)$ be a continuous representation satisfying the following conditions:
\begin{enumerate}
    \item $\rho$ is unramified almost everywhere.
    \item For each place $v\mid l$ of $F$, the representation $\rho|_{G_{F_v}}$ is crystalline with labelled Hodge--Tate weights all equal to $\{0, 1\}$.
    \item\label{tech} $\overline{\rho}$ is decomposed generic, $\overline{\rho}|_{G_{F(\zeta_l)}}$ is absolutely irreducible with enormous image. There exists $\sigma\in G_F-G_{F(\zeta_l)}$ such that $\overline{\rho}(\sigma)$ is a scalar.
    \item There exists a regular algebraic cuspidal automorphic representation $\pi$ of $\GL_2(\AAA_F)$ of weight $0$ satisfying \begin{enumerate}
        \item $\overline{\rho}\cong\overline{r}_{\iota}(\pi)$.
        \item For each place $v\mid l$ of $F$, $\pi_v$ is unramified.
    \end{enumerate}  
\end{enumerate}

Then $\rho$ is automorphic: there exists a cuspidal automorphic representation $\Pi$ of $\GL_2(\AAA_F)$ of weight $0$ such that $\rho\cong r_{\iota}(\Pi)$. Moreover, if $v$ is a finite place of $F$ and either $v\mid l$ or both $\rho$ and $\pi$ are unramified at $v$, then $\Pi_v$ is unramified.
\end{theorem}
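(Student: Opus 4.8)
The plan is to reduce this statement to the Fontaine--Laffaille automorphy lifting theorem \cite[Theorem~6.1.1]{ACC+18} for $\GL_n$ over CM fields, specialized to $n=2$; indeed the hypotheses (1)--(4) above have been formulated precisely so as to match the $n=2$ instance of the hypotheses of that theorem, so the work is essentially a verification of hypotheses. First I would recall the exact statement of \cite[Theorem~6.1.1]{ACC+18}, fix a coefficient field $E/\QQ_l$ large enough to contain the relevant Hecke eigenvalues and the images of all embeddings $F_v\hookrightarrow\overline{\QQ}_l$ for $v\mid l$, and set up the global deformation problem for $\overline{\rho}$ with the following local conditions: the crystalline Fontaine--Laffaille condition at each place $v\mid l$, which is legitimate because $l$ is unramified in $F$ and the Hodge--Tate weights $\{0,1\}$ lie in the Fontaine--Laffaille range $[0,l-2]$ since $l\ge 5$; the unramified condition at those finite places $v\nmid l$ where both $\rho$ and $\pi$ are unramified; and the unrestricted local condition at the remaining finitely many places.

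Next I would check that the global hypotheses needed to run the Calegari--Geraghty / Taylor--Wiles--Kisin patching argument of \cite{ACC+18} are supplied by (3) and (4). Residual automorphy is hypothesis (4): $\overline{\rho}\cong\overline{r}_{\iota}(\pi)$ with $\pi$ of weight $0$ and $\pi_v$ unramified at $v\mid l$, so $\pi$ contributes to the relevant (completed) cohomology at a level prime to $l$. The assumption that $\overline{\rho}|_{G_{F(\zeta_l)}}$ is absolutely irreducible with enormous image is the big-image condition guaranteeing the existence of sufficiently many Taylor--Wiles primes and the required vanishing of Galois cohomology; the existence of $\sigma\in G_F-G_{F(\zeta_l)}$ with $\overline{\rho}(\sigma)$ scalar is used, together with $l\ge 5$, in the choice of Taylor--Wiles data and to ensure the patched module is nonzero in the correct degree; and the \emph{decomposed generic} hypothesis on $\overline{\rho}$ is exactly what makes the Caraiani--Scholze vanishing theorem applicable, so that the cohomology is concentrated in a single degree and the numerical coincidence underlying the patching goes through despite a possibly positive defect $l_0$. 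Granting these, \cite[Theorem~6.1.1]{ACC+18} produces a $\overline{\QQ}_l$-point of the patched ring, hence a cuspidal automorphic representation $\Pi$ of $\GL_2(\AAA_F)$ of weight $0$ with $r_{\iota}(\Pi)\cong\rho$.

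Finally, the ``moreover'' clause follows by tracking the chosen local conditions through the patching. At $v\mid l$ the crystalline Fontaine--Laffaille condition with Hodge--Tate weights $\{0,1\}$ corresponds on the automorphic side to spherical vectors, i.e. to level prime to $l$, so $\Pi_v$ is automatically unramified; at a place $v\nmid l$ where we imposed the unramified deformation condition the same reasoning gives $\Pi_v$ unramified. Alternatively, once $\rho\cong r_{\iota}(\Pi)$ is known one may combine the compatibility of $\WD(\rho|_{G_{F_v}})$ with $\rec_{F_v}(\Pi_v\otimes|\det|^{-1/2})$ up to (Frobenius-)semisimplification --- at $v\nmid l$ by \cite{Var14}, at $v\mid l$ by \cite{ACa23, Hev23} --- with the genericity of $\Pi_v$ to conclude that $\Pi_v$ is unramified whenever $\rho|_{G_{F_v}}$ is. I expect the only genuine obstacle to be organizational: arranging a single run of the \cite{ACC+18} patching machine that simultaneously accommodates the fixed Fontaine--Laffaille conditions at $l$, the prescribed unramified conditions at the relevant $v\nmid l$, the unrestricted conditions elsewhere, and the auxiliary Taylor--Wiles primes --- which is precisely the situation \cite[Theorem~6.1.1]{ACC+18} has been engineered to handle, so in the end the proof is a matter of matching notation and checking hypotheses rather than of new argument.
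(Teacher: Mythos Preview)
Your proposal is correct and matches the paper's approach: the paper does not give an independent proof but simply records this theorem as \cite[Theorem~2.1]{AN19}, which it describes as ``essentially a special case of \cite[Theorem~6.1.1]{ACC+18}'', and your write-up is precisely the hypothesis-matching exercise that justifies this. One small remark: for the ``moreover'' clause your first argument (tracking the local deformation conditions through the patching, which is how \cite{AN19} and \cite{ACC+18} proceed) is the intended one; your alternative via \cite{ACa23, Hev23} is anachronistic relative to \cite{AN19} and, in this paper, would make the logic circular since those results are invoked elsewhere to \emph{use} the present theorem.
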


We give the definitions of decomposed generic and enormous image in the technical condition \ref{tech}. Take a continuous mod $l$ representation $\overline{\rho}:G_F\to\GL_2(\overline{\FF}_l)$.

\begin{defn}
Let $\ad^0$ denote the space of trace zero matrices in $\M_{2\times 2}(\overline{\FF}_l)$ with the adjoint $\GL_2(\overline{\FF}_l)$-action. We say that $\overline{\rho}$ has \emph{enormous image} if $H=\overline{\rho}(G_F)$ is absolutely irreducible and satisfies the following:
\begin{enumerate}
    \item $H$ has no nontrivial $l$-power order quotient.
    \item $H^0(H, \ad^0) = H^1(H, \ad^0) = 0$.
    \item For any simple $\overline{\FF}_l[H]$-submodule $W\subseteq\ad^0$, there is a regular semisimple $h\in H$ such that $W^h\ne0$.
\end{enumerate}
\end{defn}

\begin{defn}
\begin{enumerate}
    \item We say that a prime $p\ne l$ is \emph{decomposed generic} for $\overline{\rho}$ if $p$ splits completely in $F$, and for any place $v\mid p$ in $F$, $\overline{\rho}$ is unramified at $v$ with the quotient of the two eigenvalues of $\overline{\rho}(\Frob_v)$ not equal to $1,p,p^{-1}$. Here $\Frob_v$ is the geometric Frobenius element at $v$.
    \item We say that $\overline{\rho}$ is \emph{decomposed generic} if there exists $p\ne l$ that is decomposed generic for $\overline{\rho}$.
\end{enumerate}
\end{defn}

Next is a potential automorphy theorem \cite[Theorem~3.9]{AN19}.

\begin{theorem}\label{potential_automorphy}
Suppose that $F$ is a CM field, $l$ is an odd prime that is unramified in $F$, and $k/\FF_l$ finite. Let $\calO$ be a discrete valuation ring finite over $W(k)$ with residue field $k$. Let $\overline{\rho}:G_F\to\GL_2(k)$ be a continuous absolutely irreducible representation such that 
\begin{itemize}
    \item $\det(\overline{\rho})=\overline{\epsilon}_l^{-1}$;
    \item for each $v\mid l$, $\overline{\rho}|_{G_{F_v}}$ admits a crystalline lift $\rho_v:G_{F_v}\to\GL_2(\calO)$ with all labelled Hodge--Tate weights equal to $\{0, 1\}$.
\end{itemize}

Suppose moreover that $F^\text{avoid}/F$ is a finite extension. Then we can find
\begin{itemize}
    \item a finite CM extension $F_1/F$ that is linearly disjoint from $F^\text{avoid}$ over $F$ with $l$ unramified in $F_1$;
    \item a regular algebraic cuspidal automorphic representation $\pi$ for $GL_2(\AAA_{F_1})$ of weight $0$, unramified at places above $l$;
    \item an isomorphism $\iota:\overline{\QQ}_l\xrightarrow{\sim}\CC$
\end{itemize}
such that (composing $\overline{\rho}$ with some embedding $k\hookrightarrow\overline{\FF}_l$) $$\overline{r}_\iota(\pi)\cong\overline{\rho}|_{G_{F_1}}.$$

If $\overline{v}_0\nmid l$ is a finite place of $F^+$, then we can moreover find $F_1$ and $\pi$ as above with $\pi$ unramified above $\overline{v}_0$.
\end{theorem}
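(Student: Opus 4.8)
The plan is to follow the Taylor template for potential automorphy: realize $\overline\rho$ as the $\mathfrak l$-torsion of a Hilbert--Blumenthal abelian variety varying in a moduli space, produce a point of that moduli space over a controlled CM extension by a variant of Moret--Bailly's theorem, and propagate automorphy from an auxiliary prime using the automorphy lifting theorem over CM fields (Theorem~\ref{automorphy_lifting}). Fix a totally real field $M$ with a prime $\mathfrak l\mid l$ of residue field $k$, and an auxiliary prime $l'\ne l$ (large and generic relative to all the data) with a prime $\mathfrak l'\mid l'$ of $\calO_M$; we realize $\overline\rho$ using abelian varieties of dimension $[M:\QQ]$ with $\calO_M$-multiplication.

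Over $F$ one forms the moduli space $Z$ of tuples $(A,\iota_M,\lambda,\eta)$, where $\iota_M$ is an $\calO_M$-action, $\lambda$ a suitable $\calO_M$-polarization, and $\eta$ a prime-to-$ll'$ level structure, together with (i) an isomorphism $A[\mathfrak l]\xrightarrow{\sim}\overline\rho$ of $(\calO_M/\mathfrak l)[G_F]$-modules compatible with the Weil pairing and the cyclotomic character (this is where the normalization $\det\overline\rho=\overline\epsilon_l^{-1}$ is used), and (ii) a level structure at $\mathfrak l'$ forcing $A[\mathfrak l']$ to be induced from an algebraic Hecke character $\psi$ of a quadratic CM extension, hence residually automorphic by cyclic automorphic induction. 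The space $Z$ is smooth over $F$, and the essential geometric input is that, after possibly enlarging the prime-to-$ll'$ level, $Z$ admits a geometrically connected component defined over $F$; this follows from the irreducibility of Hilbert modular varieties together with the absolute irreducibility of $\overline\rho$, which makes the relevant geometric monodromy large. One then invokes a form of Moret--Bailly's theorem producing CM points: for a suitable finite set $S$ of places of $F$ and nonempty opens $\Omega_v\subseteq Z(F_v)$ for $v\in S$, there is a finite CM extension $F_1/F$, linearly disjoint from $F^\text{avoid}$ over $F$ with $l$ unramified in $F_1$, and a point $x\in Z(F_1)$ lying in $\Omega_v$ above each $v\in S$. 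At $v\mid l$ one takes $\Omega_v$ to consist of abelian varieties with good reduction realizing the given crystalline lift of $\overline\rho|_{G_{F_v}}$ with Hodge--Tate weights $\{0,1\}$ — nonemptiness here is exactly the content of the crystalline-lift hypothesis, via the Fontaine--Laffaille equivalence between such crystalline representations and $p$-divisible groups over the ring of integers of $F_v$, legitimate because $l$ is unramified in $F$ — at the places above $\overline{v}_0$ and above $l'$ one imposes good reduction (with the prescribed induced $\mathfrak l'$-torsion, and $l'$ unramified, at the latter), and the archimedean conditions together with building $Z$ over the maximal totally real subfield force $F_1$ to be CM.

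The point $x$ yields an abelian variety $A/F_1$ with an $\calO_M$-action, with $A[\mathfrak l]\cong\overline\rho|_{G_{F_1}}$, with good reduction above $l$, $l'$ and $\overline{v}_0$, and with $A[\mathfrak l']$ of the prescribed induced form. Then $\rho_{A,\mathfrak l'}$, the $\mathfrak l'$-adic Tate module, is crystalline with Hodge--Tate weights $\{0,1\}$ above $l'$, unramified above $l$ and $\overline{v}_0$, and residually automorphic (its reduction being the automorphic induction of $\psi$, which may be taken unramified above $l$, $l'$ and $\overline{v}_0$). Feeding $\rho_{A,\mathfrak l'}$ into the automorphy lifting theorem over $F_1$ — in a form accommodating this residual situation, its remaining hypotheses (decomposed generic, enormous image, a scalar element) being arranged by the genericity of $l'$ and of the auxiliary CM extension, enlarging $F_1$ if necessary while keeping it disjoint from $F^\text{avoid}$ — produces a weight-$0$ cuspidal automorphic representation $\Pi$ of $\GL_2(\AAA_{F_1})$ with $\rho_{A,\mathfrak l'}\cong r_{\iota'}(\Pi)$ and, by the ``moreover'' clause of Theorem~\ref{automorphy_lifting}, with $\Pi$ unramified above $l$, $l'$ and $\overline{v}_0$. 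Since the Tate modules of $A$ form a compatible system and $\Pi$ carries a compatible system of Galois representations, a Chebotarev comparison of Frobenius traces identifies $\rho_{A,\mathfrak l}$ with $r_\iota(\Pi)$ for a compatible $\iota:\overline\QQ_l\xrightarrow{\sim}\CC$, both representations being semisimple; reducing modulo $\mathfrak l$ and composing with an embedding $k\hookrightarrow\overline\FF_l$ gives $\overline{r}_\iota(\Pi)\cong\overline\rho|_{G_{F_1}}$. Taking $\pi=\Pi$ then proves the theorem, the final assertion about $\overline{v}_0$ being built into the choice of $S$ and the $\Omega_v$.

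I expect the crux to be twofold and intertwined. First, showing that the doubly level-structured moduli space $Z$ has a geometrically connected component over $F$: the twist by $\overline\rho$ and the auxiliary $\mathfrak l'$-structure interact nontrivially with the component group (a narrow ray class group) of the ambient Hilbert modular variety, so the monodromy argument needs care. Second, arranging \emph{simultaneously} all the hypotheses of the automorphy lifting theorem for $\rho_{A,\mathfrak l'}$ — residual automorphy, enormous image, decomposed genericity, a scalar $\sigma$ — which forces the auxiliary prime $l'$, the auxiliary CM field and the character $\psi$ to be chosen very carefully and compatibly with both the global moduli problem and the prescribed disjointness from $F^\text{avoid}$.
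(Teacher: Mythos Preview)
The paper does not prove this statement; it is quoted as \cite[Theorem~3.9]{AN19} and used as a black box, so there is no in-paper proof to compare against. Your sketch accurately reconstructs the strategy of that reference --- Taylor's method via moduli of Hilbert--Blumenthal abelian varieties with level structure at $\mathfrak l$ and an auxiliary $\mathfrak l'$, a Moret--Bailly argument producing a point over a controlled CM extension $F_1$, and prime-switching through the automorphy lifting theorem of \cite{ACC+18} --- and you have correctly identified the two genuine pressure points (geometric connectedness of $Z$ over $F$, and simultaneous arrangement of the residual hypotheses at $l'$).
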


Finally we state two remarkable recent results. The first is \cite[Theorem~4.3.3]{ACa23}. The original theorem is proved for $\pi$ of rank $n$ and arbitrary weight, but for simplicity we only state the special case when $\pi$ is of rank $2$ and weight $0$.

\begin{theorem}\label{deRham}
Let $F$ be a CM field,  $\iota:\overline{\QQ}_l\to\CC$ an isomorphism and $\pi$ a regular algebraic cuspidal automorphic representation of $\GL_2(\AAA_F)$ of weight $0$. If $\overline{r}_\iota(\pi)$ is absolutely irreducible and decomposed generic, then for every place $v\mid l$ of $F$ the representation $r_\iota(\pi)|_{G_{F_v}}$ is potentially semistable with $\tau$-labelled Hodge--Tate weights $\HT_\tau=\{0, 1\}$ for each embedding $\tau:F\hookrightarrow\overline{\QQ}_l$. If $\pi_v$ and $\pi_{v^c}$ are unramified, then $r_\iota(\pi)|_{G_{F_v}}$ is crystalline.
\end{theorem}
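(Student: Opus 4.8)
The plan is to adapt the derived Taylor--Wiles--Calegari--Geraghty patching argument of \cite{ACC+18} to a \emph{potentially semistable} local deformation condition at the places above $l$, fed by Scholze's construction of $r_\iota(\pi)$ inside the completed homology of the locally symmetric spaces for $\GL_2/F$. Scholze's construction by itself attaches no $p$-adic Hodge theory to $r_\iota(\pi)$; the point of the argument is that a patched module supported on \emph{all} of an appropriate potentially semistable (or crystalline) local deformation ring forces the $\overline{\QQ}_l$-point cut out by $\pi$ to lie over the potentially semistable locus, hence forces $r_\iota(\pi)|_{G_{F_v}}$ to be potentially semistable of the prescribed Hodge--Tate and inertial type.

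First I would reduce to a favourable global setting. Since being de Rham with prescribed Hodge--Tate weights is equivalent to being potentially semistable with those weights, and is insensitive to finite base change on $F_v$, the conclusion is unchanged by passing to a finite CM extension of $F$. Using Arthur--Clozel base change for $\GL_2$, and the fact that absolute irreducibility and the decomposed generic condition survive base change to CM fields in which the relevant rational primes still split completely, I would descend to a CM field over which $l$ is unramified and $l\ge 5$, $\overline{r}_\iota(\pi)|_{G_{F(\zeta_l)}}$ is absolutely irreducible with enormous (adequate) image, and Taylor--Wiles primes are available. At the finitely many places $w\nmid l$ where $\pi$ ramifies I would use the already-known $l\ne p$ local-global compatibility up to Frobenius semisimplification (e.g. \cite{Var14}) to fix the correct local deformation condition away from $l$.

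The local input at $l$ is Kisin's theory of potentially semistable deformation rings: for $v\mid l$, let $\tau_v$ be the inertial type of $\rec_{F_v}(\pi_v)$ (trivial exactly when $\pi_v$ is unramified), and let $R_v^{\square}$ be the framed deformation ring of potentially semistable lifts of $\overline{r}_\iota(\pi)|_{G_{F_v}}$ with all $\tau$-labelled Hodge--Tate weights $\{0,1\}$ and inertial type $\tau_v$; this ring is $\calO$-flat of the expected dimension, and its $\overline{\QQ}_l$-points are exactly the potentially semistable representations of that type (in the unramified case one takes the crystalline deformation ring at $v$ and, because of the self-duality built into the construction, also at $v^c$; for weight $0$ with $l$ unramified and large, Fontaine--Laffaille makes the crystalline ring formally smooth, but the genuinely potentially semistable ring is needed when $\pi_v$ is ramified). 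I would then patch the localisation $\widetilde{H}_*(X,\calO)_{\gothm}$ at the non-Eisenstein maximal ideal attached to $\overline{r}_\iota(\pi)$, producing a perfect complex over $R_\infty = R_\infty^{\mathrm{loc}}[[x_1,\dots,x_g]]$, with $R_\infty^{\mathrm{loc}}$ the completed tensor product of the local framed rings at the bad places (the rings above $l$ being the ones above); the usual numerical coincidence then shows the patched module is maximal Cohen--Macaulay over $R_\infty$, so its support is a union of irreducible components of $\Spec R_\infty$. Since $\pi$ is cuspidal and cohomological, by the concentration of $\widetilde{H}_*(X,\calO)_{\gothm}$ in a narrow range of degrees the Hecke eigensystem of $\pi$ contributes to the patched module; it therefore gives a $\overline{\QQ}_l$-point of $\Spec R_\infty[1/l]$ in the support, whose image in $\Spec R_v^{\square}[1/l]$ classifies $r_\iota(\pi)|_{G_{F_v}}$, which is thus potentially semistable with Hodge--Tate weights $\{0,1\}$, and crystalline when the crystalline condition at $v$ (and $v^c$) was imposed. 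Matching the weight-$0$ normalisation, including the $|\det|^{-1/2}$ twist, with the Hodge--Tate numbering of the deformation ring is routine but must be tracked.

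The main obstacle is the patching step itself: one must show the patched module spreads over the \emph{full} potentially semistable local deformation ring, so that the point of $\pi$ is forced into the potentially semistable locus. This needs the localised completed homology to be as small as possible --- maximal Cohen--Macaulay of the right dimension over the big ring and cohomologically concentrated after localising at $\gothm$ --- which is precisely the delicate derived-patching input of \cite{ACC+18}, and is why the enormous-image and decomposed-generic conditions (hence the base change above) are required; it also involves a mild bootstrap, since one already needs $l\ne p$ local-global compatibility to fix the correct local conditions at the ramified places of $\pi$. A secondary point worth noting: Sen theory in the eigenvariety would only yield Hodge--Tate-ness, whereas potential semistability is strictly stronger and is exactly what patching over a potentially semistable ring provides; an alternative would be to exhibit $r_\iota(\pi)$ as part of a de Rham local system on a connected rigid space containing a point where de Rham-ness is known and invoke rigidity of de Rham local systems, but making this family precise for the non-polarisable $r_\iota(\pi)$ is itself the crux.
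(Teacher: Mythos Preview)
The paper does not prove this theorem at all: Theorem~\ref{deRham} is stated in Section~2 under ``Finally we state two remarkable recent results'' and is simply a quotation of \cite[Theorem~4.3.3]{ACa23} (A'Campo), specialised to $\GL_2$ and weight $0$. There is no proof in the paper to compare against.

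Your sketch is, in outline, the strategy of the cited reference: Calegari--Geraghty patching in the style of \cite{ACC+18} over the potentially semistable (resp.\ crystalline) local deformation rings at $v\mid l$, together with the support/dimension numerology to force the Hecke eigensystem of $\pi$ to land over the de Rham locus. So as a description of how the result is actually proved in \cite{ACa23} your proposal is on the right track, but it is not a comparison with anything the present paper does. If the intent was to supply a proof for the paper, the correct thing to write is a one-line citation, not a patching argument; if the intent was to reconstruct A'Campo's argument, then your sketch is a reasonable high-level summary, with the usual caveat that the genuinely hard content---local-global compatibility at the auxiliary bad places, the vanishing range for completed homology localised at a non-Eisenstein ideal, and the growth of the patched module over the full deformation ring---is asserted rather than established.
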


The second is the up-to-monodromy $l=p$ local-global compatibility \cite[Theorem~1.1, Remark~1.2]{Hev23}.

\begin{theorem}\label{semisimplification}
Let $F$ be a CM field,  $\iota:\overline{\QQ}_l\to\CC$ an isomorphism and $\pi$ a regular algebraic cuspidal automorphic representation of $\GL_n(\AAA_F)$. If $\overline{r}_\iota(\pi)$ is absolutely irreducible and decomposed generic, then for every place $v\mid l$ of $F$, $$\WD(r_{\iota}(\pi)|_{G_{F_v}})^{\Ss}\otimes_\iota\CC\cong\rec_{F_v}(\pi_v\otimes|\det|^{\frac{1-n}{2}})^{\Ss}.$$
\end{theorem}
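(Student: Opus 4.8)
The plan is to reduce the identity to a matching of characteristic polynomials of Frobenius, cut it down to the semistable case by a solvable base change, and then deduce the matching by interpolation from the polarizable setting, where the $l=p$ local--global compatibility is already available.

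Since only the semisimplification of the Weil--Deligne representation is asserted, the statement is equivalent to the equality of characteristic polynomials $\det(X-r_D(w))$, for all $w\in W_{F_v}$, with those of $\rec_{F_v}(\pi_v\otimes|\det|^{(1-n)/2})$, where $(r_D,N_D)=\WD(r_\iota(\pi)|_{G_{F_v}})$. By \cite[Theorem~4.3.3]{ACa23} (Theorem~\ref{deRham} for $n=2$), $r_\iota(\pi)|_{G_{F_v}}$ is potentially semistable with the expected Hodge--Tate weights, so $\WD(r_\iota(\pi)|_{G_{F_v}})$ is defined by Fontaine's recipe of Section~2 and is computed on $D=D_{\mathrm{st},L}(r_\iota(\pi)|_{G_{F_v}})$ via $r_D(w)=w\varphi^{\alpha(w)}$. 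I would then pass to a solvable CM extension $F'/F$, chosen so that the decomposed-generic rational prime for $\overline{r}_\iota(\pi)$ stays completely split (so $\overline{r}_\iota(\pi)|_{G_{F'}}$ stays decomposed generic), so that $F'$ is disjoint from the fixed field of $\ad\overline{r}_\iota(\pi)$ (so $\overline{r}_\iota(\pi)|_{G_{F'}}$ stays absolutely irreducible), and so that $r_\iota(\pi)|_{G_{F'_{v'}}}$ becomes semistable for all $v'\mid l$; such an $F'$ exists because the image of inertia under a potentially semistable representation is, modulo its unipotent part, a finite solvable group (inertia being pro-solvable), and these local solvable extensions can be realized inside a CM solvable extension of $F$. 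By solvable descent $\pi_{F'}$ is again cuspidal regular algebraic with $r_\iota(\pi_{F'})=r_\iota(\pi)|_{G_{F'}}$; since Fontaine's $\WD$ construction is insensitive to the base field (Section~2) and $\rec$ commutes with base change, it suffices to prove the identity for $\pi_{F'}$, the descent back to $F$ being the usual one for compatible systems under solvable base change. So I may assume $r_\iota(\pi)|_{G_{F_v}}$ is semistable; then the underlying Weil representation of $\WD(r_\iota(\pi)|_{G_{F_v}})$ is unramified, so both $\WD(r_\iota(\pi)|_{G_{F_v}})^{\Ss}$ and $\rec_{F_v}(\pi_v\otimes|\det|^{(1-n)/2})^{\Ss}$ are unramified and are pinned down, respectively, by the characteristic polynomial of the Frobenius $\varphi$ on $D_{\mathrm{st}}(r_\iota(\pi)|_{G_{F_v}})$ and by the Satake/Iwahori--Hecke parameter of $\pi_v$.

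The heart of the argument is to match those two. I would use the construction of \cite{Sch15, HLTT16}: $r_\iota(\pi)$ arises inside the cohomology of the locally symmetric spaces for $\GL_n/F$ as an $l$-adic limit of Galois representations attached to polarizable automorphic representations (self-dual classical points, or boundary/endoscopic contributions on the auxiliary unitary Shimura varieties), for which the $l=p$ local--global compatibility at places above $l$ is known by \cite{Car14, BLGGT14a}. The spherical (resp. Iwahori) Hecke eigenvalue at $v$ is a coordinate on the relevant parameter space, hence varies $l$-adically continuously, and the polarizable points are $l$-adically dense; for those points the Frobenius eigenvalues on $\WD^{\Ss}$ equal the $\rec$-eigenvalues (after reducing them too to the semistable case). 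Because A'Campo's theorem guarantees that the specialization at $\pi$ is genuinely semistable of the expected Hodge--Tate weights, the limit of the approximants' Frobenius data actually computes the characteristic polynomial of $\varphi$ on $D_{\mathrm{st}}(r_\iota(\pi)|_{G_{F_v}})$, and the identity passes to the limit.

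The hard part is exactly this last interpolation step. Since semistability is not a closed condition in $l$-adic families, one cannot pass $\WD^{\Ss}$ to an arbitrary limit; the argument rests on coupling A'Campo's a priori de Rham result with a ``classicality at $v$'' statement --- that a de Rham point of the expected weight in the family has its crystalline Frobenius eigenvalues at $v$ equal to the interpolated Hecke data --- and proving this in the non-polarizable setting, where there is no Shimura variety with a good integral model at $v$ to lean on, is the technical core. A secondary point requiring care is the base-change reduction of the first step: it relies on the independence of Fontaine's $\WD$ construction under solvable extension (recalled in Section~2) together with a clean descent of the semisimplified identity. Compared with the $l\ne p$ analogue of \cite{Var14}, the extra difficulty is precisely that one cannot work with the geometric Frobenius on \'etale cohomology at $v$ directly but must route everything through $p$-adic Hodge theory.
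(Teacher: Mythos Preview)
The paper does not prove this theorem; it is quoted as \cite[Theorem~1.1, Remark~1.2]{Hev23} and used as a black-box input to the main result (Theorem~\ref{main}). There is thus no proof in the paper to compare your proposal against.

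Your sketch is roughly in the spirit of how such results are approached: reduce to the semistable case by a solvable CM base change compatible with the decomposed-generic and irreducibility hypotheses, then match crystalline Frobenius eigenvalues with Hecke data by $l$-adic interpolation from polarizable points where $l=p$ compatibility is already known, using A'Campo's de Rham theorem to make the limit meaningful. Two remarks. First, after arranging the Galois side to be semistable you assert that $\rec_{F_v}(\pi_v\otimes|\det|^{(1-n)/2})^{\Ss}$ is automatically unramified; this does not follow, and you must separately arrange in the base change that $\pi_{v'}$ has Iwahori-fixed vectors --- this is possible, but it is an independent local condition on $F'/F$, not a consequence of semistability on the Galois side, and assuming it at this stage is circular. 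Second, you correctly flag the interpolation step as the hard part, but what you have written there is a heuristic rather than an argument: passing crystalline Frobenius data through an $l$-adic limit to a non-polarizable point, and identifying the result with $\varphi$ on $D_{\mathrm{st}}$ of the specialization, is precisely the technical core of \cite{Hev23} and requires substantial machinery beyond ``density plus continuity.'' Your proposal is a plausible outline of the shape of such a proof, but the actual content lies entirely in the step you have left open.
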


We also modify the statement for simplicity. The original theorem proves that under certain partial order, the monodromy of the Galois side is ``less than'' the monodromy of the automorphic side (\cite[Remark~1.2]{Hev23}). In particular, if the Galois side has nontrivial monodromy, so does the automorphic side. It remains to prove the converse.

\section{Local-Global Compatibility}
Now we are ready to prove our main theorem.

\begin{theorem}\label{main}
Let $F$ be a CM field and let $\pi$ be a regular algebraic cuspidal automorphic representation of $\GL_2(\AAA_F)$ of weight $0$. Fix an isomorphism $\iota:\overline{\QQ}_l\cong\CC$. Suppose that 
\begin{enumerate}
    \item $l\ge5$ is a prime that is unramified in $F$,
    \item $\overline{r}_{\iota}(\pi)$ is decomposed generic, $\overline{r}_{\iota}(\pi)(G_{F(\zeta_l)})$ is enormous, and there is a $\sigma\in G_F-G_{F(\zeta_l)}$ such that $\overline{r}_{\iota}(\pi)(\sigma)$ is a scalar.
\end{enumerate}
Then for any finite place $v\mid l$ in $F$, we have
$$\WD(r_{\iota}(\pi)|_{G_{F_v}})^{\Fss}\otimes_\iota\CC\cong\rec_{F_v}(\pi_v\otimes|\det|^{-1/2}).$$
\end{theorem}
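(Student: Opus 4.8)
The plan is to reduce everything to the up-to-semisimplification statement of Theorem \ref{semisimplification} and the refinement in \cite[Remark~1.2]{Hev23}, so that the only thing left to check is that the monodromy operator $N$ on the Galois side is nonzero exactly when $\pi_v$ is special. By Theorem \ref{semisimplification} and the partial-order refinement, $N$ on the Galois side is ``dominated'' by $N$ on the automorphic side; in particular if $\pi_v$ is not special then $\rec_{F_v}(\pi_v\otimes|\det|^{-1/2})$ has trivial monodromy, hence so does $\WD(r_\iota(\pi)|_{G_{F_v}})$, and Theorem \ref{semisimplification} upgrades to the full isomorphism. So the entire content is the case where $\pi_v$ is special: here $\rec_{F_v}(\pi_v\otimes|\det|^{-1/2})$ has a nontrivial nilpotent $N$, and I must show $\WD(r_\iota(\pi)|_{G_{F_v}})$ also has nontrivial $N$. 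Suppose for contradiction that $N=0$ on the Galois side, i.e.\ $r_\iota(\pi)|_{G_{F_v}}$ becomes crystalline over some finite extension.

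First I would use Theorem \ref{deRham}: under our hypotheses $r_\iota(\pi)|_{G_{F_w}}$ is potentially semistable with Hodge--Tate weights $\{0,1\}$ for every $w\mid l$. If the monodromy vanishes at $v$, then after replacing $F$ by a suitable finite CM extension $F'$ (chosen to be linearly disjoint from all relevant auxiliary fields, with $l$ still unramified, and arranged so that $\overline{r}_\iota(\pi)|_{G_{F'}}$ still satisfies all the bigness/decomposed-generic hypotheses --- this is where one invokes standard group-theoretic lemmas on enormous image surviving restriction) we may assume $r_\iota(\pi)|_{G_{F'_{v'}}}$ is crystalline for $v'\mid v$, with labelled Hodge--Tate weights $\{0,1\}$. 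A subtlety, flagged in the introduction, is that the $\WD$ functor is built from $D_{\mathrm{st},L}$ and one must check that passing to $F'$ does not create or destroy monodromy: this is exactly the independence of $\WD(D_{\mathrm{st},L}(\rho))$ from the choice of $L$ recorded after the definition of $(\varphi,N,G_{L/K})$-modules, so $N$ at $v$ is unchanged by the base change.

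Next I apply the potential automorphy theorem (Theorem \ref{potential_automorphy}) to $\overline{\rho}=\overline{r}_\iota(\pi)|_{G_{F'}}$ --- after twisting so that the determinant condition $\det\overline{\rho}=\overline{\epsilon}_l^{-1}$ holds --- using the crystalline lifts with Hodge--Tate weights $\{0,1\}$ provided by $r_\iota(\pi)$ itself at places above $l$, and taking $F^{\mathrm{avoid}}$ large enough to preserve the genericity hypotheses. This yields a finite CM extension $F_1$, an isomorphism $\iota$, and a regular algebraic cuspidal $\pi_1$ on $\GL_2(\AAA_{F_1})$ of weight $0$, \emph{unramified above $l$}, with $\overline{r}_\iota(\pi_1)\cong\overline{r}_\iota(\pi)|_{G_{F_1}}$. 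Now I feed the crystalline representation $\rho=r_\iota(\pi)|_{G_{F_1}}$ into the automorphy lifting theorem (Theorem \ref{automorphy_lifting}): condition (1) is clear, condition (2) holds by Theorem \ref{deRham} together with the contradiction hypothesis, condition (3) is arranged by the choice of $F_1$ and the surviving enormous-image/scalar hypotheses, and condition (4) is $\pi_1$. The conclusion is a cuspidal automorphic $\Pi$ of weight $0$ on $\GL_2(\AAA_{F_1})$ with $r_\iota(\Pi)\cong r_\iota(\pi)|_{G_{F_1}}$ and, crucially, $\Pi$ unramified at all places above $l$ --- in particular at the place of $F_1$ above $v$.

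Finally I derive the contradiction. On one hand, by strong multiplicity one / the fact that $r_\iota$ determines $\pi$ up to isomorphism (for the restriction to a place above $v$, via Theorem \ref{semisimplification} applied downstairs at $l\neq p$ places, or directly via local-global compatibility away from $l$ after base change), the local component of $\Pi$ above $v$ must match that of the base change of $\pi$, which is special (a twist of Steinberg) since $\pi_v$ is. But $\Pi$ is unramified above $l$, and the place above $v$ divides $l$, so $\Pi$'s component there is unramified --- contradicting that it is a ramified twist of Steinberg. Hence $N\neq 0$ on $\WD(r_\iota(\pi)|_{G_{F_v}})$, and Theorem \ref{semisimplification} now gives the full isomorphism. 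The main obstacle is the bookkeeping in the two base changes: ensuring simultaneously that $l$ stays unramified, that the decomposed-generic and enormous-image conditions are preserved (using that these are stable under restriction to suitable extensions, by the linear-disjointness clauses in Theorems \ref{automorphy_lifting} and \ref{potential_automorphy}), that the determinant can be normalized to $\overline{\epsilon}_l^{-1}$ by a character twist without disturbing the local picture at $v$, and that the comparison of local components at $v$ upstairs really forces ramification --- i.e.\ that base change of a twist of Steinberg stays a twist of Steinberg, which it does since $F_1/F$ is a field extension and Steinberg stays Steinberg under finite base change of local fields.
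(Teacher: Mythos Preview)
Your overall architecture is the paper's: reduce via Theorem~\ref{semisimplification} and \cite[Remark~1.2]{Hev23} to showing $N\neq0$ when $\pi_v$ is special; assume $N=0$; pass to a base change making $r_\iota(\pi)$ crystalline above $l$; run potential automorphy (Theorem~\ref{potential_automorphy}) and then automorphy lifting (Theorem~\ref{automorphy_lifting}) to produce $\Pi$ over $F_1$ with $r_\iota(\Pi)\cong r_\iota(\pi)|_{G_{F_1}}$ and $\Pi$ unramified above $l$. The gap is in how you extract the contradiction in your last paragraph.

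You assert that $\Pi_{v_1}$ must match ``the base change of $\pi$'' at $v_1$ and hence be a twist of Steinberg. But no automorphic base change of $\pi$ to $F_1$ is available: the extension $F_1/F$ furnished by Theorem~\ref{potential_automorphy} is not assumed solvable, so there is nothing for strong multiplicity one to compare $\Pi$ with. And reading off $\Pi_{v_1}$ directly from $r_\iota(\Pi)|_{G_{F_{1,v_1}}}$ is precisely the $l=p$ compatibility being proved. The only legitimate input is Theorem~\ref{semisimplification} for $\Pi$, giving
\[
\rec_{F_{1,v_1}}(\Pi_{v_1}\otimes|\det|^{-1/2})\cong\WD(r_\iota(\Pi)|_{G_{F_{1,v_1}}})^{\Ss}\cong\WD(r_\iota(\pi)|_{G_{F_{1,v_1}}})^{\Ss},
\]
and (via Theorem~\ref{semisimplification} for $\pi$ over $F$ and restriction) the right-hand side is a sum of two characters with ratio $|\cdot|^{\pm1}$. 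This determines $\Pi_{v_1}$ only up to being a subquotient of a reducible principal series: it could be a twist of Steinberg \emph{or} the one-dimensional subquotient, and you cannot single out Steinberg. In fact, since $\Pi_{v_1}$ is spherical it would have to be the one-dimensional piece, so your claimed conclusion ``$\Pi_{v_1}$ is a twist of Steinberg'' is actually false in this setup. The paper's contradiction is that this forces $\Pi_{v_1}$ to be non-generic, while every local component of a cuspidal $\Pi$ is generic. Replace your ``unramified versus Steinberg'' clash with this genericity argument and the proof goes through.
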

\begin{proof}
Fix a prime $p\ne l$ for which $\overline{r}_{\iota}(\pi)$ is decomposed generic. By Theorem \ref{deRham} we know that $r_{\iota}(\pi)|_{G_{F_v}}$ is potentially semistable, so it makes sense to consider the associated Weil--Deligne representation. By Theorem \ref{semisimplification}, it suffices to show that if $v\mid l$ is a finite place at which $\pi$ is special (i.e., $\pi_v$ is a twist of Steinberg), then $\WD(r_{\iota}(\pi)|_{G_{F_v}})$ has nontrivial monodromy. Let $N$ be the monodromy operator. To show that $N$ is nontrivial it suffices to do so after restricting to a finite extension. In particular, we may go to a solvable base change that is disjoint from $\overline{F}^{\ker(\overline{r}_{\iota}(\pi))}$ in which $l$ is unramified and $p$ is totally split, and assume that 
\begin{itemize}
    \item $\pi_v$ is an unramified twist of Steinberg,
    \item $r_{\iota}(\pi)|_{G_{F_v}}$ is semistable,
    \item $\overline{r}_{\iota}(\pi)$ is unramified at $v$ and $v^c$.
\end{itemize}

Now assume that $N=0$, and therefore $r_{\iota}(\pi)|_{G_{F_v}}$ is crystalline. We apply Theorem \ref{potential_automorphy} with $\rho=r_{\iota}(\pi)$ and $F^{\text{avoid}}$ equal to the Galois closure of $\overline{F}^{\ker(\overline{r}_{\iota}(\pi))}(\zeta_l)/\QQ$, to get a CM Galois extension $F_1/F$ linearly disjoint from $F^\text{avoid}$ over $F$ with $l$ unramified in $F_1$, and a regular algebraic cuspidal automorphic representation $\pi_1$ for $\GL_2(\AAA_{F_1})$ unramified at places above $l$ and of weight $0$, such that $\overline{r}_{\iota}(\pi)|_{G_{F_1}}\cong\overline{r}_{\iota}(\pi_1)$.  

We wish to apply the automorphy lifting theorem Theorem \ref{automorphy_lifting}. To do so, we check the following assumptions:
\begin{enumerate}
\setcounter{enumi}{-1}
    \item The prime $l$ is unramified in $F_1$. This is guaranteed by Theorem \ref{potential_automorphy}.
    \item $r_{\iota}(\pi)|_{G_{F_1}}$ is unramified almost everywhere. We know that $r_{\iota}(\pi)$ is unramified almost everywhere by the main result of \cite{HLTT16} and so is the restriction.
    \item For each place $v_1\mid l$ of $F_1$, the representation $r_{\iota}(\pi)|_{G_{F_{1,v_1}}}$ is crystalline with labelled Hodge--Tate weights all equal to $\{0, 1\}$. Since $l$ is unramified in both $F$ and $F_1$ and hence $(F_v)_0=F_v$, $(F_{1,v_1})_0=F_{1,v_1}$, writing $V$ for the $\QQ_l$-vector space underlying $r_{\iota}(\pi)|_{G_{F_v}}$, we can use Galois descent to compute the dimensions 
    \begin{align*}
        \dim_{\QQ_l}V & =\dim_{F_v}(B_{\text{cris}}\otimes_{\QQ_l}V)^{G_{F_v}}\\
         & =\dim_{F_v}((B_{\text{cris}}\otimes_{\QQ_l}V)^{G_{F_{1,v_1}}})^{\Gal(F_{1,v_1}/F_v)}\\
         & =\dim_{F_{1,v_1}}(B_{\text{cris}}\otimes_{\QQ_l}V)^{G_{F_{1,v_1}}}.
    \end{align*} 
    Thus the restriction $r_{\iota}(\pi)|_{G_{F_{1,v_1}}}$ must also be crystalline.
    \item $\overline{r}_{\iota}(\pi)|_{G_{F_1}}$ is decomposed generic (by a similar argument using Chebotarev density to choose a prime at which $\overline{r}_{\iota}(\pi)|_{G_{F_1}}$ is decomposed generic, as in the proof of \cite[Theorem~4.1]{AN19}). $\overline{r}_{\iota}(\pi)|_{G_{F_1(\zeta_l)}}$ is absolutely irreducible (encoded in the definition of enormous image) with enormous image (by our choice of $F^\text{avoid}$). There exists $\sigma\in G_{F_1}-G_{F_1(\zeta_l)}$ such that $\overline{r}_{\iota}(\pi)(\sigma)$ is a scalar (still by our choice of $F^\text{avoid}$).
    \item There exists a regular algebraic cuspidal automorphic representation $\pi_1$ of $\GL_2(\AAA_{F_1})$ of weight $0$ such that $\overline{r}_{\iota}(\pi_1)\cong \overline{r}_{\iota}(\pi)|_{G_{F_1}}$. For each place $v_1\mid l$ of $F_1$, $\pi_{1,v_1}$ is unramified. This is by Theorem \ref{potential_automorphy}.
\end{enumerate}

The automorphy lifting theorem gives a cuspidal automorphic representation $\Pi$ of $\GL_2(\AAA_{F_1})$ of weight $0$ such that $r_{\iota}(\Pi)\cong r_{\iota}(\pi)|_{G_{F_1}}$ and $\Pi_{v_1}$ is unramified at all $v_1\mid l$ in $F_1$. Now 
$$\rec_{F_{1,v_1}}(\Pi_{v_1}\otimes|\det|^{-1/2})\cong\WD(r_{\iota}(\Pi)|_{G_{F_{1,v_1}}})^{\Ss}\otimes_{\iota}\CC \cong\WD(r_{\iota}(\pi)|_{G_{F_{1,v_1}}})^{\Ss}\otimes_{\iota}\CC.$$ Under the local Langlands correspondence for $F_{1,v_1}$, the last object corresponds to a subquotient of an induction of two characters whose quotient equal to $|\cdot|^{\pm1}$, which can not be generic. This contradicts the fact that $\Pi_{v_1}$ is generic.
\end{proof}

\begin{proof}[Proof of Theorem \ref{density}]
It follows directly from Theorem \ref{main} and \cite[Lemma~2.9]{AN19}.
\end{proof}

\section{An application}
In \cite[\S 9]{Maz94}, Mazur defined a collection of invariants attached to the isomorphism classes of certain filtered $(\varphi,N)$-modules called the two-dimensional monodromy modules. We first recall the definitions. Let $D$ be a filtered $(\varphi, N)$-module, which is a $K_0$-vector space with actions of $\varphi$ and $N$ and an exhaustive and separated decreasing filtration $\{\Fil^i\}_i$ on $D_K:=D\otimes_{K_0}K$, satisfying the following:
\begin{enumerate}
    \item $\varphi$ is $K_0$-semilinear;
    \item $N$ is $K_0$-linear;
    \item $N\varphi=l\varphi N$.
\end{enumerate}
Note that $D$ has a basis of $\varphi$-eigenvectors, and $N$ takes $\varphi$-eigenvectors to $\varphi$-eigenvectors or $0$.

\begin{defn}
A \emph{(two-dimensional) monodromy module} $D$ is a two-dimensional filtered $(\varphi, N)$-module with the following properties:
\begin{enumerate}
    \item $N$ is nonzero;
    \item There is $j_0\in\ZZ$ for which $\Fil^{j_0}D_K$ is $1$-dimensional;
    \item $N(D)\otimes_{K_0}K\ne\Fil^{j_0}D_K$.
\end{enumerate}
\end{defn}

\begin{defn}
Let $D$ be a two-dimensional monodromy module. The \emph{Fontaine--Mazur $\calL$-invariant} $\calL(D)$ is the unique element of $K$ such that $x-\calL(D)\cdot Nx$ spans $\Fil^{j_0}D_K$, where $x$ is a $\varphi$-eigenvector with $Nx\ne 0$.
\end{defn}

Let $f$ be a classical normalized eigenform of $\Gamma_0(N)$ of even weight $k\ge2$ (and Nebentypus character $\chi$, for some integer $N$), and $\rho_f:G_{\QQ}\to\GL_2(E)$ its associated $l$-adic Galois representation for a suitable $E/\QQ_l$. If $\rho_{f,l}:=\rho_f|_{G_{\QQ_l}}$ is semistable non-crystalline, it is automatically non-critical, which means that $D_{\text{st}}(\rho_{f,l})$ is a monodromy module, and therefore we may make sense of the Fontaine--Mazur $\calL$-invariant $\calL(D_{\text{st}}(\rho_{f,l}))$ attached to $f$. Note that this $\calL$-invariant only depends on $\rho_{f,l}$, so in particular it only depends on $f$ and $l$.

Now we return to the settings of this paper and slightly generalize the above idea. Let $F$ be a CM field and let $l$ be a rational prime that is inert in $F$. Fix an isomorphism $\iota:\overline{\QQ}_l\cong\CC$. Let $\pi$ be a regular algebraic cuspidal automorphic representation of $\GL_2(\AAA_F)$ of weight $0$. Assume that $\pi_v$ is special for the finite place $v$ in $F$ lying above $l$. The proof of Theorem \ref{main} shows that $r_{\iota}(\pi)|_{G_{F_v}}$ is (potentially) semistable non-crystalline, and is non-critical since $G_{F_v}\cong G_{\QQ_l}$. Hence we can make sense of the Fontaine--Mazur $\calL$-invariant $\calL(D_{\text{st}}(r_{\iota}(\pi)|_{G_{F_v}}))$ attached to $\pi$. Similar to the classical modular form case, the $\calL$-invariant only depends on $r_{\iota}(\pi)|_{G_{F_v}}$, so in particular it only depends on $\pi$ and $l$. More generally, if we only assume that $l$ is unramified in $F$, then we need to assume that $r_{\iota}(\pi)|_{G_{F_v}}$ is non-critical to make $D_{\text{st}}(r_{\iota}(\pi)|_{G_{F_v}})$ a monodromy module. 

Once we have the definition, it is a natural question to ask if it coincides with other versions of $\calL$-invariants other than the one due to Fontaine--Mazur. In the classical modular form case the question is extensively studied and the answer is yes (for instance see the introduction of \cite{BDI10}), but in general the question is widely open. We hope to see the question being answered with the emergence of generalizations of more versions of $\calL$-invariants. 

\bibliographystyle{amsalpha}
\bibliography{bibtex}

\end{document}